\theoremstyle{plain}
\newtheorem{theorem}{Theorem}[section]
\newtheorem{lemma}[theorem]{Lemma}
\newtheorem{corollary}[theorem]{Corollary}
\newtheorem*{theorem*}{Theorem} 
\theoremstyle{definition}
\theoremstyle{remark}
\newtheorem{remark}[theorem]{Remark} 
\newtheorem*{acknowledgements}{\bf{Acknowledgments}} 
\newcommand{\R}{\mathbb{R}}
\renewcommand{\a}{\alpha}
\newcommand{\e}{\varepsilon}
\newcommand{\na}{\nabla}
\newcommand{\p}{\partial}
\newcommand{\ddt}{\frac{d}{dt}}
\newcommand{\ddr}{\frac{d}{dr}}
\newcommand{\abs}[1]{\left\lvert#1\right\rvert}
\DeclareMathOperator{\supp}{supp}
\newcommand{\dm}{d\mu} 
\newcommand{\dt}{dt} 
\newcommand{\dr}{dr} 
\newcommand{\dy}{dy}
\title[Monotone Integral Quantities of Mean Curvature Flow]{On Ecker's local integral quantity at infinity for ancient mean curvature flows}
\author[]{Keita Kunikawa} 
\address{Advanced Institute for Materials Research (AIMR), Tohoku University, 2-1-1 Katahira, Aoba-ku, Sendai, 980-8577, Japan}
\email{keita.kunikawa.e2@tohoku.ac.jp} 
\thanks{The author
was supported by JSPS KAKENHI Grant Number JP19K14521.}
\date{\today}
\begin{document}
\maketitle

\begin{abstract}
We point out that Ecker's local integral quantity agrees with Huisken's global integral quantity at infinity for ancient mean curvature flows if Huisken's one is finite on each time-slice. In particular, this means that the finiteness of Ecker's integral quantity at infinity implies the finiteness of the entropy at infinity. 
\end{abstract}

%%%%%%%%%%%%%%%%%%%%%%%%%%%%
%%%%%%%%%%%%%%%%%%%%%%%%%%%%
%%%%%%%%%%%%%%%%%%%%%%%%%%%%
\section{Introduction and Results} 
There are two monotone integral quantities along mean curvature flow. One is Huisken's (global) integral quantity \cite{Hu90}, and the other is Ecker's (local) integral quantity \cite{Ec01}. It is known by Ecker that these two integral quantities coincide with each other as a limit at zero. 
The purpose of this paper is to show a relation between them at infinity for ancient mean curvature flows. More precisely, we prove that Ecker's integral quantity coincides with Huisken's integral quantity at infinity if Huisken's one is finite on each time-slice (see Theorem \ref{thm1}). This result allows us to interchange some arguments about the monotone quantities at infinity. Especially, we can replace the entropy at infinity by Ecker's integral quantity at infinity (see Corollaries \ref{dimbd} and \ref{cor3.2}). 

First, we recall some definitions, notations and previous results on the monotone quantities in the next four subsections. 
\subsection{Mean curvature flow}
Let $M^n$ be an $n$-dimensional smooth manifold (not necessarily to be compact) and $x_t=x(\cdot, t): M\times I \to \R^{N}$ be a one-parameter family of smooth immersions, where $I\subset \R$ is an interval. We consider the \textit{mean curvature flow} 
\[\frac{\p x}{\p t} = H(x) \]
with images $M_t=x_t(M)$, where $H$ denotes the mean curvature vector field of $x_t$.  
Denote by  
\[\mathcal{M}:=\bigcup_{t\in I}M_t\times \{t\}\subset \R^{N}\times \R \]
the \textit{space-time track} of mean curvature flow. We also write $\mathcal{M}=(M_t)_{t\in I}$ for an abbreviation. Each $M_t$ is called a \textit{time-slice}. We always assume that each time-slice $M_t$ has no boundary in $\R^N$. We say that the mean curvature flow is \textit{ancient} if $I=(-\infty, 0)$. Static solutions (i.e., minimal submanifolds), self-shrinking solutions and translating solutions are typical examples of ancient mean curvature flows.  

\subsection{Huisken's monotonicity formula}
In \cite{Hu90} Huisken introduced the following integral quantity 
\[\int_{M_t}\Phi \dm_t, \quad \text{where }\ \Phi(x,t) := \frac{1}{(-4\pi t)^{\frac{n}{2}}}e^{\frac{|x|^2}{4t}} \quad (t<0), \] 
and obtained the \textit{monotonicity formula} 
\begin{align}
	\ddt\int_{M_t}\Phi \dm_t=-\int_{M_t}\abs{H-\frac{x^\perp}{2t}}^2\Phi\dm_t=-\int_{M_t}|H-\na^\perp\psi|^2\Phi\dm_t  \leq 0 \label{Huisken}
\end{align} 
as long as $\int_{M_t}\Phi \dm_t<\infty $ for each $t<0$. Here we have put 
\[\psi_r:=\log(\Phi r^n):=\psi+n\log r=\frac{|x|^2}{4t}-\frac{n}{2}\log\left(\frac{-4\pi t}{r^2}\right). \] 
in \eqref{Huisken}. 
\begin{remark} \label{rmk_mono}
	Choose any point $(x_0, t_0)\in \R^N\times \R$ and fix it. We define 
	\[\Phi_{x_0, t_0}(x, t):=\Phi(x-x_0, t-t_0)\] 
	for $x\in \R^N$ and $t<t_0$. Then Huisken's monotonicity formula with respect to $(x_0, t_0)$ becomes  
	\[\ddt\int_{M_t}\Phi_{x_0, t_0}\dm_t=-\int_{M_t}\abs{H-\frac{(x-x_0)^\perp}{2(t-t_0)}}^2\Phi_{x_0, t_0}\dm_t \]
	accordingly. 
\end{remark} 

By Huisken's monotonicity formula, we can take the limit: 
\begin{align*}
	\Theta(\mathcal{M}, x_0, t_0):=\lim_{t\to t_0}\int_{M_t}\Phi_{x_0, t_0}\dm_t.
\end{align*}
Also the following limit exist (being allowed to become infinity):  
\begin{align*}
	\Theta(\mathcal{M}, \infty):=\lim_{t\to -\infty}\int_{M_t}\Phi_{x_0, t_0}\dm_t. 
\end{align*} 
Note that $\Theta(\mathcal{M}, \infty)$ does not depend on the choice of $(x_0, t_0)\in \R^N\times \R$.

\subsection{Ecker's monotonicity formula} 
In \cite{Ec01}, on the other hand, Ecker introduced another local integral quantity 
\begin{align*}
	\mathcal{A}(\mathcal{M}\cap E_r)=\iint_{\mathcal{M}\cap E_r}\abs{\na\psi}^2+|H|^2\psi_r \quad (r>0), 
\end{align*} 
where $E_r$ is the so-called \textit{heat-ball} 
\[E_r:=\{(x, t)\in \R^{N}\times (-\infty, 0)\mid \psi_r > 0\}\subset \R^{N}\times \R. \] 
Note that $\psi_r=0$ on the boundary $\partial E_r$ by definition. The heat-ball $E_r$ can be  written as the form 
\[E_r=\bigcup_{-\frac{r^2}{4\pi}<t<0}B_{R_r(t)}\times \{t\}, \quad \text{where } \ R_r(t):=\sqrt{2nt\log\left(\frac{-4\pi t}{r^2}\right)}. \]
Therefore, for ancient mean curvature flow $\mathcal{M}$, the integral of a function $f$ on a heat-ball $E_r$ means  
\[\iint_{\mathcal{M}\cap E_r}f=\int_{-\frac{r^2}{4\pi}}^0\int_{M_t\cap B_{R_r(t)}}f \dm_t \dt. \]
It is known that Ecker's integral behaves like $n$-dimensional volume. For example, it 
scales like 
\[\frac{\mathcal{A}(\mathcal{M}\cap E_r)}{r^n}=\mathcal{A}(\mathcal{M}^{r}\cap E_1) \quad \text{for all } r>0,\]
where $\mathcal{M}^{r}=(r^{-1}M_{r^2 t})_{t<0}$ is the so-called \textit{parabolic rescaling} of $\mathcal{M}$. In addition, Remark 3.2 in \cite{Ec01} leads to the estimate 
	\begin{align} \label{Ec-vs-Hui}
		\frac{\mathcal{A}(\mathcal{M}\cap E_r)}{r^n}\leq \frac{c_n\mathcal{H}^n\Big(M_{-\frac{r^2}{4\pi}}\cap B_{\sqrt{\frac{2n}{\pi}}r}\Big)}{r^n}\leq e^{2n}c_n\int_{M_{-\frac{r^2}{4}}}\Phi\dm_{-\frac{r^2}{4}}   
	\end{align}
for all $r>0$, where $\mathcal{H}^n$ denotes the $n$-dimensional Hausdorff measure and $c_n$ is a dimensional constant. Moreover, on self-shrinking solutions, Ecker's integral exactly agrees with Huisken's one (Proposition 3.3 in \cite{Ec01}): for all $r>0$ and all $t<0$, we have 
\[\frac{\mathcal{A}(\mathcal{M}\cap E_r)}{r^n}=\int_{M_t}\Phi \dm_t. \] 

We say that $\mathcal{M}$ is \textit{well-defined} (see \cite{Ec01}, \cite{Ec04}) in $B_{\sqrt{\frac{2n}{\pi}}r}\times (-\frac{r^2}{4\pi}, 0)$ if it has no boundary inside this set and satisfies the condition 
		\begin{align*}
			\mathcal{H}^n\left(M_{-\frac{r^2}{4\pi}}\cap B_{\sqrt{\frac{2n}{\pi}}r}\right)<\infty. 
		\end{align*} 
In particular, when $\mathcal{M}$ is ancient, we say that $\mathcal{M}$ is \textit{well-defined} (see \cite{Ca06}) in $\R^N\times (-\infty, 0)$ if each $M_t$ has no boundary in $\R^N$ and has locally finite mass 
		\begin{align*}
			\mathcal{H}^n\left(M_t \cap B_{2\sqrt{-2nt}}\right)<\infty.  
		\end{align*} 
Well-definedness of $\mathcal{M}$ guarantees that all integral quantities considered in this paper are locally finite. 

In \cite{Ec01} (Theorem 3.4), Ecker derived the \textit{local monotonicity formula} 
\begin{align} \label{localmono}
	\ddr\left( \frac{\mathcal{A}(\mathcal{M}\cap E_r)}{r^n}\right)=\frac{n}{r^{n+1}}\iint_{\mathcal{M}\cap E_r} |H-\na^\perp \psi|^2\geq 0      	
\end{align} 
for well-defined $\mathcal{M}$. Using \eqref{localmono}, he showed that the limit of the integral quantity at zero satisfies   
\begin{align}
	\lim_{r\to 0}\frac{\mathcal{A}(\mathcal{M}\cap E_r)}{r^n}=\Theta(\mathcal{M}, 0, 0)=\lim_{t\to 0}\int_{M_t}\Phi \dm_t. \label{at0} 
\end{align}
In this sense, Ecker's integral quantity can be interpreted as a localization of Huisken's global one. 
\begin{remark} \label{Ec<Hui}
	By the estimate \eqref{Ec-vs-Hui}, Ecker's integral quantity is always bounded from above by Huisken's one. We know a better bound for this. In fact, integrating the local monotonicity formula and comparing with Huisken's one, 
		\begin{align*}
			\frac{\mathcal{A}(\mathcal{M}\cap E_r)}{r^n}\leq \int_{M_{-\frac{r^2}{4}}}\Phi\dm_{-\frac{r^2}{4}}
		\end{align*}
holds for all $r>0$ in general.  
\end{remark}

\subsection{Ecker's integral vs. Huisken's integral at infinity} \label{subsection EH}
Observing above facts, we come up with a question about the relation between these integrals at infinity. Inspired by the work of Yokota \cite{Yo10} for ancient solution of Ricci flow, we expect that the above two monotone quantities coincide with each other at infinity. In fact, this is true if we assume 
	\[\Theta(\mathcal{M}, \infty):=\lim_{t\to -\infty}\int_{M_t}\Phi\dm_t\left(=\sup_{t<0}\int_{M_t}\Phi\dm_t\right)<\infty. \] 
	Here we will give a sketch of the proof under the assumption $\Theta(\mathcal{M}, \infty)<\infty$. First, integrating the local monotonicity formula, we know the following expression by Ecker (in \cite{Ec01}, Remark 3.5): for all $r>0$, 
	\begin{align}
	\frac{\mathcal{A}(\mathcal{M}\cap E_r)}{r^n}-\Theta(\mathcal{M}, 0, 0)=\iint_{\mathcal{M}\cap E_r}|H-\na^\perp\psi|^2\Phi-\frac{1}{r^n}\iint_{\mathcal{M}\cap E_r}|H-\na^\perp\psi|^2. \label{ER35}
	\end{align} 
	Since we assume $\Theta(\mathcal{M}, \infty)<\infty$, Huisken's monotonicity formula combined with the works of Ilmanen \cite{Il93}, \cite{Il95} and White \cite{Wh94} shows that every sequence $r_i\to \infty$ has a subsequence (also denoted by $r_i$) such that $\mathcal{M}^{r_i}$ converges to a self-shrinker $\mathcal{M}^\infty$ (this limit is called a  \textit{tangent flow at $-\infty$} or a \textit{blow-down limit}). Thus, taking such a subsequence $r_i$ in \eqref{ER35}, we have 
	\begin{align} \label{blowdown}
	\lim_{r_i\to\infty}\frac{\mathcal{A}(\mathcal{M}\cap E_{r_i})}{r_i^n}-\Theta(\mathcal{M}, 0, 0)=\lim_{r_i\to\infty}\iint_{\mathcal{M}\cap E_{r_i}}|H-\na^\perp\psi|^2\Phi, 
	\end{align}
	but this equality actually holds without extracting subsequence since the quantities in both sides are monotone. 
	
	On the other hand, integrating Huisken's monotonicity formula, one obtains  
	\begin{align}
		\Theta(\mathcal{M}, \infty)-\Theta(\mathcal{M}, 0, 0)=\iint_{\mathcal{M}} |H-\na^\perp\psi|^2\Phi. \label{Hui-limit}
	\end{align} 
	Comparing \eqref{blowdown} and \eqref{Hui-limit}, we conclude that 
	\begin{align*}
		\lim_{r\to \infty}\frac{\mathcal{A}(\mathcal{M}\cap E_r)}{r^n}= \Theta(\mathcal{M},\infty). 
	\end{align*}
This completes the proof with the condition $\Theta(\mathcal{M}, \infty)<\infty$. 

\subsection{Main results} 
In the current paper, we show that the same conclusion holds as in Subsection \ref{subsection EH} under a mild assumption. The point is that we do not need the uniform bound $\Theta(\mathcal{M}, \infty) < \infty$. Instead, we only assume the finiteness of Huisken's integral on each time-slice.

\begin{theorem} \label{thm1}
Let $\mathcal{M}=(M_t)_{t<0}$ be an ancient mean curvature flow which satisfies 
\begin{align}
	\int_{M_t}\Phi \dm_t <\infty \quad  \text{for each } \ t<0. \label{finite}
\end{align} 
 Then we have 
	\begin{align}
		\lim_{r\to \infty}\frac{\mathcal{A}(\mathcal{M}\cap E_r)}{r^n}= \Theta(\mathcal{M},\infty). \label{main}
	\end{align}
\end{theorem}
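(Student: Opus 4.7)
The plan is to use Ecker's identity \eqref{ER35} in a rearranged form and reduce the statement to a single application of the monotone convergence theorem. This bypasses the blow-down compactness argument of Subsection~\ref{subsection EH}, which needed the a priori uniform bound $\Theta(\mathcal{M},\infty)<\infty$ to extract a tangent flow at $-\infty$.

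The first step is to combine the two terms on the right-hand side of \eqref{ER35} into a single integrand:
\begin{align*}
\frac{\mathcal{A}(\mathcal{M}\cap E_r)}{r^n}-\Theta(\mathcal{M},0,0)
=\iint_{\mathcal{M}\cap E_r}|H-\na^\perp\psi|^2\,(\Phi-r^{-n}).
\end{align*}
Since $E_r=\{\Phi>r^{-n}\}$ by definition, this integrand is non-negative on $E_r$. Extended by zero off of $E_r$, the family $f_r:=\chi_{E_r}(\Phi-r^{-n})|H-\na^\perp\psi|^2$ is pointwise non-decreasing in $r$ on $\mathcal{M}$: for fixed $(x,t)\in\mathcal{M}$ the scalar factor $\chi_{E_r}(\Phi-r^{-n})$ vanishes as long as $r\le\Phi(x,t)^{-1/n}$ and equals the continuous, strictly increasing function $\Phi(x,t)-r^{-n}$ for larger $r$, converging to $\Phi(x,t)$ as $r\to\infty$.

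Applying the monotone convergence theorem then gives, in $[0,\infty]$,
\begin{align*}
\lim_{r\to\infty}\iint_{\mathcal{M}\cap E_r}|H-\na^\perp\psi|^2\,(\Phi-r^{-n})
=\iint_{\mathcal{M}}|H-\na^\perp\psi|^2\,\Phi
=\Theta(\mathcal{M},\infty)-\Theta(\mathcal{M},0,0),
\end{align*}
the last equality coming from integrating \eqref{Huisken} from $-\infty$ to $0$ and being valid in $[0,\infty]$ whether or not $\Theta(\mathcal{M},\infty)$ is finite (note that $\Theta(\mathcal{M},0,0)$ is itself finite by \eqref{finite} and Huisken's monotonicity). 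Combining with the preceding display yields \eqref{main}. The whole proof hinges on the algebraic recombination of the two separately problematic terms in \eqref{ER35}—each of which diverges as $r\to\infty$ when $\Theta(\mathcal{M},\infty)=\infty$—into a single monotone-in-$r$, non-negative integrand, so no compactness of rescaled flows or tangent-flow analysis is invoked.
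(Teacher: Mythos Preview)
Your argument is correct and is considerably more direct than the paper's own proof. Both routes terminate at the same identity
\[
\lim_{r\to\infty}\frac{\mathcal{A}(\mathcal{M}\cap E_r)}{r^n}-\Theta(\mathcal{M},0,0)=\iint_{\mathcal{M}}|H-\na^\perp\psi|^2\Phi,
\]
and then compare with the integrated Huisken formula \eqref{Hui-limit}. The difference lies in how this identity is reached. The paper builds a smooth approximation $A_\e(s,r)$ of Ecker's integral, re-derives the monotonicity relation \eqref{mono}, proves an auxiliary integral identity (Lemma~\ref{pflem}) to convert $\int_0^\infty \frac{n}{r^{n+1}}\zeta_\e(\psi_r)\,dr$ into $e^{\a(\eta_\e)}\Phi$, and then passes to the limits $\s\to0$, $\r\to\infty$, $\e\to0$ in succession; it also treats the case $\lim_{r\to\infty}r^{-n}\mathcal{A}(\mathcal{M}\cap E_r)=\infty$ separately. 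You instead take Ecker's identity \eqref{ER35} as a black box, recombine its two right-hand terms into the single non-negative integrand $\chi_{E_r}(\Phi-r^{-n})|H-\na^\perp\psi|^2$, note that this family is pointwise non-decreasing in $r$ because $E_r=\{\Phi>r^{-n}\}$, and invoke the monotone convergence theorem once. This handles the finite and infinite cases simultaneously and bypasses both the mollification and Lemma~\ref{pflem}. What the paper's approach buys is self-containedness---it re-establishes Ecker's monotonicity rather than importing \eqref{ER35}---but since the paper itself quotes \eqref{ER35} as known from \cite{Ec01}, your use of it is entirely legitimate, and your observation that the recombined integrand is monotone in $r$ is exactly the missing elementary step that makes the smooth approximation ``unavoidable'' claim in Section~2 too pessimistic.
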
 

\begin{remark} (a) The both sides in \eqref{main} are allowed to become infinity. 

(b) In view of \eqref{Ec-vs-Hui}, the condition \eqref{finite} implies that each $M_t$ has locally finite $\mathcal{H}^n$-measure. In particular, $\mathcal{M}$ is well-defined. It is known that \eqref{finite} is satisfied when each $M_t$ has at most polynomial volume growth (see for example Remark 4.12 in \cite{Ec04}). 

(c) We do not known whether there exists an example where Huisken's integral is infinite on each time-slice but the limit at infinity of Ecker's integral is finite. Such an example (if exists) cannot have polynomial volume growth on each time-slice by (b). This means that each time-slice is geometrically complex at infinity in space. On the other hand, its blow-down limit must be relatively simple due to the finiteness of Ecker's integral at infinity. So it seems hard to find such an example. 
\end{remark}

As we will see later (Lemma \ref{ent} below), one can show that 
\[\Theta(\mathcal{M}, \infty)=\sup_{t<0}\lambda(M_t), \]
where $\lambda(M_t)$ denotes the \textit{entropy} of $M_t\subset \R^N$, which is monotone nonincreasing along mean curvature flow (see Section 3). Thus, as an application of Theorem \ref{thm1}, we have   
\begin{align} \label{Ecker=entropy}
	\lim_{r\to \infty}\frac{\mathcal{A}(\mathcal{M}\cap E_r)}{r^n}=\sup_{t<0}\lambda(M_t)  
\end{align} 
under the assumption \eqref{finite}. Recently, Colding-Minicozzi \cite{CM19} obtained some codimension bound by entropy under the assumption that $\sup_{t<0}\lambda(M_t)<\infty$. This kind of codimension bound was initiated by Calle \cite{Ca06} using Ecker's integral quantity. 
\begin{theorem}[Colding-Minicozzi \cite{CM19}, Corollary 0.6] \label{CM bound}
There exists a dimensional constant $c_n$ so that if $\mathcal{M}=(M_t)_{t<0}$ is an ancient mean curvature flow with  $\sup_{t<0}\lambda(M_t)<\infty$, then it is contained in a Euclidean subspace of dimension $\leq c_n \sup_{t<0}\lambda(M_t)$. 
\end{theorem}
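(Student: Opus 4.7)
The plan is a three-step reduction: blow down the ancient flow $\mathcal{M}$ at $-\infty$ to a self-shrinker $\Sigma$, apply a codimension bound for self-shrinkers in terms of entropy, and transfer the resulting subspace containment back to the original flow.

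Setting $\Lambda:=\sup_{t<0}\lambda(M_t)<\infty$, Huisken's monotonicity together with the Brakke compactness theory of Ilmanen \cite{Il93}, \cite{Il95} and White \cite{Wh94} guarantees that for every sequence $r_i\to\infty$, a subsequence of the parabolic rescalings $\mathcal{M}^{r_i}$ converges as Brakke flows to a self-shrinking tangent flow at $-\infty$, whose time $-1$ slice is a self-shrinker $\Sigma\subset\R^N$ of dimension $n$. Since entropy is scale-invariant and lower semicontinuous under such convergence (each Gaussian integral is lower semicontinuous, and entropy is their supremum over basepoints), one obtains $\lambda(\Sigma)\leq\Lambda$.

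Next, I would invoke the codimension bound for self-shrinkers that forms the core of \cite{CM19}: there is a dimensional constant $c_n$ such that every self-shrinker $\Sigma^n\subset\R^N$ is contained in a Euclidean subspace of dimension at most $c_n\lambda(\Sigma)$. The underlying mechanism is that along each direction $v$ in which $\Sigma$ has nontrivial spread, the Gaussian area picks up a positive additive contribution controlled from below in terms of $\int_\Sigma\langle x,v\rangle^2 e^{-|x|^2/4}\dm$, so that $k$ independent spanning directions force $\lambda(\Sigma)\gtrsim k$. Applied to the blow-down, this produces a linear subspace $V\subset\R^N$ with $\Sigma\subset V$ and $\dim V\leq c_n\Lambda$.

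It remains to show $\mathcal{M}\subset V$. For a fixed $v\perp V$, the coordinate $u(x)=\langle x,v\rangle$ satisfies $\Delta_{M_t}u=\langle H,v\rangle$ along the flow, and I would study the weighted quantity
\[I_v(t):=\int_{M_t}\langle x,v\rangle^2\,\Phi\dm_t.\]
A Gaussian integration by parts analogous to the derivation of Huisken's monotonicity yields a sign-definite evolution for a suitable parabolic rescaling of $I_v$; combined with the Brakke convergence $\mathcal{M}^{r_i}\to\mathcal{M}^\infty$ and the inclusion $\Sigma\subset V$, this forces the $-\infty$ limit of the rescaled quantity to vanish, and the monotonicity then propagates the vanishing to every $t<0$, giving $\langle x,v\rangle\equiv 0$ on $\mathcal{M}$. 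The main obstacle is precisely this transfer step: the tangent flow at $-\infty$ is a priori only defined subsequentially, so $V$ is not canonical. The cleanest workaround is to define $V^\perp:=\{v\in\R^N:I_v\equiv 0\}$ intrinsically; every blow-down $\Sigma$ is then contained in $V$, the shrinker bound gives $\dim V\leq c_n\Lambda$, and $\mathcal{M}\subset V$ by construction.
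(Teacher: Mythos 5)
First, a structural point: the paper does not prove this statement at all --- it is imported verbatim from Colding--Minicozzi \cite{CM19} (Corollary 0.6) and used as a black box, so there is no internal proof to compare against. Your proposal therefore has to stand on its own, and it has a genuine gap in the third step, the transfer of the subspace containment from the blow-down back to the flow.

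The inclusion $\Sigma\subset V$ for a tangent flow at $-\infty$ does \emph{not} imply $\mathcal{M}\subset V$, and no monotone quantity of the type you propose can repair this, because the implication is simply false. The grim reaper $\mathcal{G}=(\Gamma_t)_{t\in\R}$ in $\R^2$ (discussed in Section 3 of this very paper) is an ancient flow with $\sup_t\lambda(\Gamma_t)=2<\infty$ whose blow-down is a static line with multiplicity $2$; that line lies in a one-dimensional subspace $V$, yet each $\Gamma_t$ spans all of $\R^2$. Concretely, for $v\perp V$ the scale-invariant version of your quantity, say $\int_{M_t}\tfrac{\langle x,v\rangle^2}{-4t}\Phi\dm_t$, does tend to $0$ as $t\to-\infty$ for the grim reaper (since $\langle x,v\rangle$ stays bounded while $-t\to\infty$), but it is strictly positive at every finite time; so either the quantity is not monotone in the direction you need, or its monotonicity cannot propagate the vanishing forward. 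Your fallback of defining $V^\perp:=\{v:I_v\equiv 0\}$ makes $\mathcal{M}\subset V$ true by construction but destroys the dimension bound: the shrinker estimate controls only $\dim\mathrm{span}(\Sigma)$, and in the grim reaper example $\dim\mathrm{span}(\Sigma)=1<2=\dim V$, so the containment $\Sigma\subset V$ gives no bound on $\dim V$. The first two steps (subsequential blow-down under the entropy bound, lower semicontinuity of entropy under Brakke convergence, a codimension bound for shrinkers) are fine as far as they go, but the multiplicity of the blow-down can absorb dimensions of the flow, which is exactly why the entropy of $\Sigma$ does not see $\mathrm{span}(\mathcal{M})$. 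The actual argument of \cite{CM19} avoids the blow-down entirely: it bounds the dimension of the space of ancient caloric functions of polynomial growth on $\mathcal{M}$ by $c_n\sup_t\lambda(M_t)$ via a parabolic frequency/doubling argument, and then observes that the coordinate functions transverse to any containing affine subspace are linearly independent caloric functions of linear growth on the flow itself.
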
 

Using the relation \eqref{Ecker=entropy}, the assumption on entropy in Theorem \ref{CM bound} can be replaced by the assumption on Ecker's integral quantity (see also Calle \cite{Ca06}). 
\begin{corollary} \label{dimbd}
	There exists a dimensional constant $c_n$ so that if $\mathcal{M}=(M_t)_{t<0}$ is an ancient mean curvature flow with \eqref{finite} and $\lim_{r\to\infty}r^{-n}\mathcal{A}(\mathcal{M}\cap E_r)<\infty$, then it is contained in a Euclidean subspace of dimension $\leq c_n \sup_{t<0}\lambda(M_t) = c_n\lim_{r\to\infty}r^{-n}\mathcal{A}(\mathcal{M}\cap E_r)$. 
\end{corollary}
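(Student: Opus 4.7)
The plan is that this corollary is a direct amalgamation of three ingredients already assembled in the paper: Theorem \ref{thm1}, the auxiliary identification $\Theta(\mathcal{M},\infty)=\sup_{t<0}\lambda(M_t)$ announced as Lemma \ref{ent}, and the Colding--Minicozzi codimension bound (Theorem \ref{CM bound}). There is essentially no analysis left to do, only a substitution of hypotheses.

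First I would invoke Theorem \ref{thm1}: the ancient flow $\mathcal{M}$ satisfies assumption \eqref{finite}, so
\[\lim_{r\to \infty}\frac{\mathcal{A}(\mathcal{M}\cap E_r)}{r^n}=\Theta(\mathcal{M},\infty).\]
Next I would apply Lemma \ref{ent} to rewrite the right-hand side as $\sup_{t<0}\lambda(M_t)$, yielding
\[\lim_{r\to \infty}\frac{\mathcal{A}(\mathcal{M}\cap E_r)}{r^n}=\sup_{t<0}\lambda(M_t),\]
which is precisely the assertion \eqref{Ecker=entropy}. In particular, the assumption $\lim_{r\to\infty}r^{-n}\mathcal{A}(\mathcal{M}\cap E_r)<\infty$ is equivalent to the entropy bound $\sup_{t<0}\lambda(M_t)<\infty$, so both quantities appearing in the statement of the corollary are finite and equal.

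With that equality in hand, the hypothesis of Theorem \ref{CM bound} is satisfied, and its conclusion gives a Euclidean subspace of dimension at most $c_n\sup_{t<0}\lambda(M_t)$ containing $\mathcal{M}$. Replacing $\sup_{t<0}\lambda(M_t)$ by $\lim_{r\to\infty}r^{-n}\mathcal{A}(\mathcal{M}\cap E_r)$ via \eqref{Ecker=entropy} yields the stated bound.

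The only non-trivial points here are (i) checking that Lemma \ref{ent} is applicable under assumption \eqref{finite} alone, which is immediate since its statement is exactly about ancient flows with Huisken's integral finite on each time-slice, and (ii) noting that the Colding--Minicozzi theorem does not require any extra regularity beyond what is already encoded in \eqref{finite} (which, as noted in Remark (b) following Theorem \ref{thm1}, implies well-definedness of $\mathcal{M}$). I do not anticipate any real obstacle: the corollary is essentially a tautological consequence of Theorem \ref{thm1} once Lemma \ref{ent} is in place.
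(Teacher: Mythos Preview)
Your proposal is correct and matches the paper's own argument essentially verbatim: the paper derives Corollary \ref{dimbd} by combining Theorem \ref{thm1} with Lemma \ref{ent} to obtain \eqref{Ecker=entropy}, so that \eqref{finite} together with finiteness of Ecker's quantity at infinity forces $\sup_{t<0}\lambda(M_t)<\infty$, and then applies the Colding--Minicozzi bound (Theorem \ref{CM bound}). There is nothing to add or correct.
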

\begin{remark}
	 It follows from \eqref{Ec-vs-Hui} or Remark \ref{Ec<Hui} that $\sup_{t<0}\lambda(M_t)<\infty$ implies  
	\[\int_{M_t}\Phi\dm_t<\infty \quad \text{for each $t<0$}, \quad \text{and} \quad \lim_{r\to \infty}\frac{\mathcal{A}(\mathcal{M}\cap E_r)}{r^n}<\infty. \] 
	We emphasize that the converse of this fact is nontrivial. Theorem \ref{thm1} combining with \eqref{Ecker=entropy} says that condition \eqref{finite} and the finiteness of Ecker's integral quantity at infinity lead to the finiteness of entropy at infinity. Then applying the codimension bound by Colding-Minicozzi, we obtain Corollary \ref{dimbd}. 
\end{remark}

\begin{acknowledgements}
	The author would like to thank Yohei Sakurai for helpful discussions during this work. The author is grateful to Takumi Yokota for giving him a rough idea of the proof in \cite{Yo10} for Ricci flow. 
\end{acknowledgements}

%%%%%%%%%%%%%%%%%%%%%%%%%%%%
%%%%%%%%%%%%%%%%%%%%%%%%%%%%
%%%%%%%%%%%%%%%%%%%%%%%%%%%%
\section{Proof of Theorem \ref{thm1}} 
In this section, we give the proof of Theorem \ref{thm1}. We follow the same line as in \cite{EKNT08} and \cite{Yo10} (see also \cite{Ec01}). 
The proof is divided into several steps. In the first half of the proof, we repeat the proof of Ecker's monotonicity which is already known by the argument in \cite{Ec01} or \cite{EKNT08}. However, this step is unavoidable since we use some expression for the monotonicity of approximated Ecker's integral in our proof (see \eqref{mono} below). 

In the proof, we always assume the condition \eqref{finite}, that is, Huisken's integral is finite for each time-slice. 

\subsection{Smooth approximation of Ecker's integral}
First, we recall that the condition \eqref{finite} ensures the well-definedness of $\mathcal{M}$, so Ecker's integral quantity $\mathcal{A}(\mathcal{M}\cap E_r)$ makes sense for every $r>0$. In order to make the behavior of $\psi_r$ near $t=0$ clear, we take a truncated subset of $\mathcal{M}$:   
	\[\mathcal{M}_s:=\bigcup_{t\in (-\infty, s]} M_t\times \{t\}\subset \mathcal{M}, \quad -\infty<s<0. \]
 For $r>0$, as a restriction of $\mathcal{A}(\mathcal{M}\cap E_r)$ to the time interval $(-\infty, s]$, we consider an integral quantity 
	\begin{align*}
		\mathcal{A}(\mathcal{M}_s\cap E_r):=\iint_{\mathcal{M}_s\cap E_r}|\na\psi|^2+|H|^2\psi_r. 
	\end{align*} 
Moreover, we consider a smooth approximation of $\mathcal{A}(\mathcal{M}\cap E_r)$ in the following. The smooth approximation is needed to avoid difficulties arising from the fact that we have no control on the regularity of the heat-ball $E_r$. Fix small $\e>0$ and take a smooth function $\eta_\e:\R\to \R_+$ which satisfies the following properties: 
	\begin{itemize}
		\item $\supp \eta_\e$ is contained in $[0, \e]$,  
		\item $\int_{-\infty}^{\infty}\eta_\e(y)\dy=1$. 
	\end{itemize}
	Then we define $\zeta_\e:\R\to \R_+$ and $Z_\e:\R\to \R_+$ by 
	\[\zeta_\e(x):=\int_{-\infty}^x\eta_\e(y)\dy, \quad Z_\e(x):=\int_{-\infty}^x\zeta_\e(y)\dy. \] 
	Clearly $\eta_\e(x)$ is a smooth approximation of the Delta function $\delta(x)$. Hence it is easy to see that 
	\begin{align} \label{zeta}
		\chi(x-\e)\leq \zeta_\e(x)\leq \chi(x), \quad \text{for all } x\in \R,
	\end{align}
	where $\chi$ is the Heaviside function. Integrating these inequalities, we have 
	\begin{align} \label{Z}
		[x-\e]_+\leq Z_\e(x)\leq [x]_+, \quad \text{for all } x\in \R, 
	\end{align}
	where $[x]_+=\max\{x, 0\}$. Therefore we can say that 
	\begin{itemize}
		\item $\zeta_\e$ approaches to the Heaviside function $\chi$ as $\e \to 0$, 
		\item $Z_\e$ approaches to the function $x\mapsto [x]_+=\max\{x, 0\}$ as $\e \to 0$. 
	\end{itemize} 
	Now the desired smooth approximation is defined by 
	\[A_\e(s, r):=\iint_{\mathcal{M}_s}|\na\psi|^2\zeta_\e(\psi_r)+|H|^2Z_\e(\psi_r). \] 
		Note that $\zeta_\e(\psi_r)$ and $Z_\e(\psi_r)$ vanish outside of $ E_r$ since $\psi_r\leq 0$. Hence $A_{\e}(s,r)$ and its limit 
	\[A_\e(r):=\lim_{s\to 0}A_\e(s, r) \]
	make sense. 		
\subsection{Derivative of the integral}
	
	Next we compute 
	\begin{align}
		\ddr \left(\frac{A_\e(s, r)}{r^n}\right)&=
		\frac{n}{r^{n+1}}\left(\frac{r}{n}\ddr A_\e(s, r)-A_\e(s, r)\right) \label{ddr1} \\
		&=\frac{n}{r^{n+1}}\iint_{\mathcal{M}_s}|\na\psi|^2\zeta'_\e+|H|^2\zeta_\e-|\na\psi|^2\zeta_\e-|H|^2Z_\e. \nonumber
	\end{align}
	On the other hand, we have 
	\begin{align}
		\ddt\int_{M_t}Z_\e(\psi_r)\dm_t = \int_{M_t} \zeta_\e\frac{d\psi}{dt}-Z_\e|H|^2 \dm_t, \label{ddt1}
	\end{align}
	where we have used the well known fact that 
	\[\frac{\p}{\p t}\dm_t=-|H|^2\dm_t\]
	along mean curvature flow. Integrating \eqref{ddt1} in $\tau\leq t\leq s$, we obtain 
	\begin{align*}
		\int_{M_{s}}Z_\e(\psi_r)\dm_{s}-\int_{M_{\tau}}Z_\e(\psi_r)\dm_{\tau}=\int_{\tau}^{s} \dt \int_{M_t} \zeta_\e\frac{d\psi}{dt}-Z_\e|H|^2 \dm_t. 
	\end{align*}
	It follows by taking a limit $\tau \to -\infty$ that  
	\begin{align}
		\int_{M_s}Z_\e(\psi_r)\dm_s=\iint_{\mathcal{M}_s} \zeta_\e\frac{d\psi}{dt}-Z_\e|H|^2 \label{s0infty}  
	\end{align} 
	since $\zeta_\e(\psi_r)=Z_\e(\psi_r)=0$ outside of $E_r$. Insert \eqref{s0infty} into \eqref{ddr1} to obtain 
	\begin{align}
		\ddr \left(\frac{A_\e(s, r)}{r^n}\right)=\frac{n}{r^{n+1}}\left[\iint_{\mathcal{M}_s}|\na\psi|^2\zeta'_\e+|H|^2\zeta_\e-|\na\psi|^2\zeta_\e-\frac{d\psi}{dt}\zeta_\e +\int_{M_s}Z_\e(\psi_r)\dm_s\right]. \label{ddr2}
	\end{align}
	Using   
	\[|\na\psi|^2\zeta'_\e=\langle \na\psi, \zeta'_\e\na\psi\rangle =\langle \na\psi, \na\zeta_\e \rangle \] 
	and $\zeta_\e(\psi_r)=0$ on $\p B_{R_r(t)}$, \eqref{ddr2} becomes	\begin{align*}
		\ddr \left(\frac{A_\e(s, r)}{r^n}\right)&=\frac{n}{r^{n+1}}\left[\iint_{\mathcal{M}_s}\langle \na\psi, \na\zeta_\e \rangle+|H|^2\zeta_\e-|\na\psi|^2\zeta_\e-\frac{d\psi}{dt}\zeta_\e +\int_{M_s}Z_\e(\psi_r)\dm_s\right] \\
		&= \frac{n}{r^{n+1}}\left[\iint_{\mathcal{M}_s}\left(-\Delta\psi+|H|^2-|\na\psi|^2-\frac{d\psi}{dt}\right)\zeta_\e +\int_{M_s}Z_\e(\psi_r)\dm_s\right]. 
	\end{align*}
	Now we need the computation from \cite{Ec01}. 
	\begin{lemma}[\cite{Ec01}, Lemma 3.1] \label{HElem}
	Along mean curvature flow, we have 
		\[\left(\ddt+\Delta \right)\psi=-|\na\psi|^2-|H-\na^\perp\psi|^2+|H|^2. \] 
	\end{lemma}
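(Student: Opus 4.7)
The plan is to prove the identity by direct computation from the explicit formula
$\psi(x,t) = \frac{|x|^2}{4t} - \frac{n}{2}\log(-4\pi t)$, matching both sides term by term.

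First I would work out the ambient Euclidean data: $D\psi = \frac{x}{2t}$, $D^2\psi = \frac{1}{2t}\,I_N$, and $\partial_t\psi = -\frac{|x|^2}{4t^2} - \frac{n}{2t}$. At a point of $M_t$, I then split $D\psi$ orthogonally as $D\psi = \na\psi + \na^\perp\psi$, where $\na\psi$ denotes the tangential (intrinsic) gradient and $\na^\perp\psi = \frac{x^\perp}{2t}$. This gives $|D\psi|^2 = |\na\psi|^2 + |\na^\perp\psi|^2 = \frac{|x|^2}{4t^2}$, a bookkeeping identity I will use at the end.

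Next I compute the left-hand side. For the material time derivative along the flow, since $\tfrac{\p x}{\p t} = H$ I get
\[\ddt \psi \;=\; \p_t\psi + \langle D\psi, H\rangle \;=\; -\frac{|x|^2}{4t^2}-\frac{n}{2t} + \langle \na^\perp\psi, H\rangle,\]
where the last equality uses that $H$ is normal to $M_t$. For the Laplace–Beltrami term, I would invoke the standard submanifold identity $\Delta f = \mathrm{tr}_{M}(D^2 f) + \langle Df, H\rangle$ valid for any ambient function restricted to $M \subset \R^N$ (this is essentially the observation that tangential divergence of $Df$ on $M$ picks up a mean-curvature correction). Applied to $\psi$, this yields $\Delta\psi = \frac{n}{2t} + \langle \na^\perp\psi, H\rangle$. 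Adding the two, the $n/(2t)$ terms cancel and I obtain
\[\left(\ddt+\Delta\right)\psi \;=\; -\frac{|x|^2}{4t^2} + 2\langle \na^\perp\psi, H\rangle.\]

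Finally I expand the right-hand side:
\[-|\na\psi|^2 - |H-\na^\perp\psi|^2 + |H|^2 \;=\; -|\na\psi|^2 - |\na^\perp\psi|^2 + 2\langle H, \na^\perp\psi\rangle \;=\; -|D\psi|^2 + 2\langle H, \na^\perp\psi\rangle,\]
and since $|D\psi|^2 = |x|^2/(4t^2)$ this exactly matches the LHS.

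The whole argument is essentially mechanical; the only subtlety is the submanifold Laplacian identity $\Delta f = \mathrm{tr}_M(D^2 f) + \langle Df, H\rangle$ for ambient functions restricted to $M$, which is the main point to get right (sign conventions on $H$ must be consistent with $\p_t x = H$). Once that and the orthogonal decomposition of $D\psi$ are in place, the proof collapses to a line or two of algebra.
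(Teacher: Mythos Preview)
Your computation is correct. The paper does not actually supply a proof of this lemma; it merely quotes the identity from \cite{Ec01}, so there is no argument in the paper to compare against. The direct computation you give---computing $\partial_t\psi$, $D\psi$, $D^2\psi$ in $\R^N$, applying the submanifold Laplacian formula $\Delta_M f = \mathrm{tr}_M(D^2 f)+\langle Df, H\rangle$ (with the sign convention compatible with $\partial_t x = H$), and then matching via the orthogonal splitting $|D\psi|^2=|\nabla\psi|^2+|\nabla^\perp\psi|^2$---is exactly the standard verification of this identity and is what one finds (in equivalent form) in Ecker's original derivation.
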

    Using Lemma \ref{HElem}, we get 
	\begin{align}
		\ddr \left(\frac{A_\e(s, r)}{r^n}\right)=\frac{n}{r^{n+1}}\left[\iint_{\mathcal{M}_s}|H-\na^\perp\psi|^2\zeta_\e(\psi_r)+\int_{M_s}Z_\e(\psi_r)\dm_s\right]. \label{ddr3}
	\end{align} 
	
\subsection{Error term estimate and the monotonicity of Ecker's integral} 
	In this subsection, we will show the monotonicity of $r^{-n}\mathcal{A}(\mathcal{M}\cap E_r)$. To do so, we integrate \eqref{ddr3} with respect to $r$ in $0<\sigma\leq r\leq \rho <\infty$ and apply Fubini's theorem. Then we have 
	\begin{align*}
		\frac{A_\e(s, \rho)}{\rho^n}-\frac{A_\e(s, \sigma)}{\sigma^n}&=\int_{\sigma}^{\rho}\dr  \iint_{\mathcal{M}_s}\frac{n}{r^{n+1}}|H-\na^\perp\psi|^2\zeta_\e(\psi_r)+E(s;\sigma, \rho)\\ 
		&=\int_{-\infty}^s\dt \int_{\sigma}^{\rho}\dr \int_{M_t}\frac{n}{r^{n+1}}|H-\na^\perp\psi|^2\zeta_\e(\psi_r)\dm_t +E(s;\sigma, \rho),  
	\end{align*}
	where 
	\[E(s;\sigma, \rho):=\int_{\sigma}^{\rho}\dr \int_{M_s}\frac{n}{r^{n+1}}Z_\e(\psi_r)\dm_s \]
	is an error term. 
	\begin{lemma}[Error term estimate] \label{error}
		\[\lim_{s\to 0}E(s; \sigma, \rho)=0. \]
	\end{lemma}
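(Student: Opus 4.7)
The strategy is to dominate $Z_\e(\psi_r)$ pointwise by $[\psi_r]_+$ via \eqref{Z}, and exploit the fact that this is supported on the shrinking set $M_s \cap B_{R_r(s)}$ since $R_r(s) \to 0$ as $s \to 0^-$. After applying Fubini to interchange the $r$- and $x$-integrations, the inner $r$-integral can be evaluated pointwise: writing $\psi_r(x,s) = n\log(r/r_*(x,s))$ for $r > r_*(x,s) := \sqrt{-4\pi s}\,e^{|x|^2/(4n|s|)}$, the integral over $r$ has an explicit closed form and vanishes whenever $r_*(x,s) > \rho$, i.e.\ when $|x| > R_\rho(s)$. Hence it suffices to control a spatial integral over $M_s \cap B_{R_\rho(s)}$, whose domain shrinks to the origin.

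To bound this spatial integral, I would use a layer-cake decomposition
\[\int_{M_s}[\psi_r]_+\,\dm_s = \int_0^{L}\mathcal{H}^n(M_s \cap B_{R_\lambda})\,d\lambda,\qquad L := \tfrac{n}{2}\log\tfrac{r^2}{-4\pi s},\ R_\lambda := \sqrt{R_r(s)^2 - 4|s|\lambda},\]
together with density estimates on $M_s$ coming from Huisken's monotonicity. In the inner range $R_\lambda \leq \sqrt{-2n|s|}$, Huisken's monotonicity (Remark \ref{rmk_mono}) centered at $(0, s + R_\lambda^2/(2n))$, combined with the uniform bound $\int_{M_s}\Phi\,\dm_s \leq C$ for $s$ close to $0^-$ (itself a consequence of \eqref{finite} and Huisken's monotonicity at $(0,0)$), yields $\mathcal{H}^n(M_s \cap B_{R_\lambda}) \leq C R_\lambda^n$. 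This contributes at most $C|s|^{n/2}\log^{n/2+1}(1/|s|)$, which vanishes as $s \to 0^-$. In the outer range $R_\lambda > \sqrt{-2n|s|}$, the weaker Gaussian bound $\mathcal{H}^n(M_s \cap B_R) \leq (-4\pi s)^{n/2} e^{R^2/(4|s|)} \int_{M_s}\Phi\,\dm_s$ must be used, and the exponential factor $e^{-\lambda}$ appearing after simplification, together with integration in $r \in [\sigma, \rho]$, is what ultimately yields $E(s;\sigma,\rho) \to 0$.

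The hard part is precisely the outer range: the raw Gaussian bound is only $O(r^n)$ per value of $r$, which after integration in $r$ would not vanish on its own. Extracting the needed $o(1)$ requires either a sharper density bound beyond the scale $\sqrt{-2n|s|}$, or a careful use of the monotone convergence $\int_{M_s}\Phi\,\dm_s \searrow \Theta(\mathcal{M},0,0)$ together with the accompanying weak concentration of $\Phi\,\dm_s$ at the origin as $s \to 0^-$. This is the step where the hypothesis \eqref{finite} and the approach in the style of Ecker \cite{Ec01} and Ecker--Knopf--Ni--Topping \cite{EKNT08} become essential.
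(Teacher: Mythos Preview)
Your proposal has a genuine gap: you yourself flag the ``outer range'' $R_\lambda>\sqrt{2n|s|}$ as unfinished, and indeed the Gaussian bound $\mathcal{H}^n(M_s\cap B_R)\le (4\pi|s|)^{n/2}e^{R^2/(4|s|)}\int_{M_s}\Phi\,\dm_s$ only yields a contribution of order $r^n$ after the layer-cake integration, so that $E(s;\sigma,\rho)$ is bounded by a nonvanishing constant, not by $o(1)$. The vague appeal to ``a sharper density bound'' or to ``weak concentration of $\Phi\,\dm_s$'' does not close this; you have not identified what that sharper bound is or how to derive it from \eqref{finite} alone. (Your inner-range density estimate is also shaky: centering Huisken's monotonicity at $(0,s+R_\lambda^2/(2n))$ gives an inequality in the wrong direction unless you already control $\Theta(\mathcal{M},\infty)$ or the entropy, neither of which is assumed.)

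The missing ingredient is precisely Ecker's local volume bound (Lemma~1.2 in \cite{Ec01}), which for mean curvature flow gives
\[
\mathcal{H}^n\!\big(M_s\cap B_{R_\rho(s)}\big)\;\le\; c(n)\,\mathcal{H}^n\!\big(M_{-\rho^2/(4\pi)}\cap B_{\sqrt{2n/\pi}\,\rho}\big)\,\frac{R_\rho(s)^n}{\rho^n}
\]
for \emph{all} $s\in(-\rho^2/(4\pi),0)$, with no restriction like $R_\rho(s)\le\sqrt{2n|s|}$. Once you have this, there is no need for Fubini, layer-cake, or any inner/outer split: simply bound $Z_\e(\psi_r)\le Z_\e(\psi_\rho)\le \tfrac{n}{2}\log\!\big(\rho^2/(-4\pi s)\big)$ on its support, pull the crude factor $n(\rho-\sigma)/\sigma^{n+1}$ out of the $r$-integral, and observe that $\log\!\big(\rho^2/(-4\pi s)\big)\cdot R_\rho(s)^n\to 0$ as $s\to 0^-$ because $R_\rho(s)^n\sim\big(2n|s|\log(\rho^2/(4\pi|s|))\big)^{n/2}$ decays polynomially in $|s|$. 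That is the paper's two-line argument; your elaborate decomposition is working around the absence of this one lemma.
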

	\begin{proof} 
		By the volume bound derived in \cite{Ec01} (Lemma 1.2) and well-definedness of $\mathcal{M}$ (this follows from \eqref{finite}), we compute 
	\begin{align*}
		\limsup_{s\to 0} E(s;\sigma, \rho) &\leq \limsup_{s\to 0} \frac{n(\rho-\sigma)}{\sigma^{n+1}}\int_{M_s}Z_\e(\psi_{\rho})\dm_s \\ 
		&\leq \limsup_{s\to 0}\frac{n^2(\rho-\sigma)}{2\sigma^{n+1}}\log\left(\frac{\rho^2}{-4\pi s}\right)\mathcal{H}^n\left(M_s\cap B_{R_{\rho}(s)}\right) \\
		&\leq \limsup_{s\to 0}\frac{n^2(\rho-\sigma)}{2\sigma^{n+1}}\log\left(\frac{\rho^2}{-4\pi s}\right)c(n)\mathcal{H}^n\left(M_{-\frac{\rho^2}{4\pi}}\cap B_{\sqrt{\frac{2n}{\pi}}\rho}\right)\frac{R_{\rho}(s)^n}{\rho^n} \\
		&=0.   
	\end{align*}
	\end{proof}
	Letting $s\to 0$ and using Lemma \ref{error}, we obtain
	\begin{align}
		\frac{A_\e(\rho)}{\rho^n}-\frac{A_\e(\sigma)}{\sigma^n}=\int_{-\infty}^{0}\dt \int_{\sigma}^{\rho}\dr \int_{M_t}\frac{n}{r^{n+1}}|H-\na^\perp\psi|^2\zeta_\e(\psi_r)\dm_t\geq 0. \label{mono} 
	\end{align} 
	This shows the monotonicity of $r^{-n}A_\e(r)$ as well as $r^{-n}\mathcal{A}(\mathcal{M}\cap E_r)$ by \eqref{e0}. 

\subsection{Proof of Theorem \ref{thm1}} 
In the proof of the main theorem below, we use the following: 
\begin{lemma} \label{pflem}
	For any fixed $t<0$, we have 
	\begin{align*}
	\int_{0}^{\infty} \frac{n}{r^{n+1}}\zeta_\e(\psi_r)\dr = e^{\alpha(\eta_\e)}\Phi, 
	\end{align*}
	where 
	\[e^{\a(\eta_\e)}:=\int_{-\infty}^\infty e^{-y}\eta_\e(y)\dy=\int_{0}^\infty e^{-y}\eta_\e(y)\dy.\] 
	Moreover, $\a(\eta_\e)\leq 0$ and $\a(\eta_\e)\to 0$ as $\e\to 0$. 
\end{lemma}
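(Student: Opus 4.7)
The plan is a direct computation via Fubini after rewriting $\zeta_\e$ as an integral of indicator functions, together with a standard concentration argument for the asymptotic claims.

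First I would observe that for any fixed $(x,t)$ with $t<0$,
\[
\zeta_\e(\psi_r)=\int_{-\infty}^{\psi_r}\eta_\e(y)\,\dy=\int_{-\infty}^{\infty}\mathbf{1}_{\{y<\psi_r\}}\,\eta_\e(y)\,\dy,
\]
so by Fubini's theorem (the integrand is nonnegative),
\[
\int_0^\infty\frac{n}{r^{n+1}}\zeta_\e(\psi_r)\,\dr
=\int_{-\infty}^{\infty}\eta_\e(y)\left(\int_0^\infty\frac{n}{r^{n+1}}\mathbf{1}_{\{\psi_r>y\}}\,\dr\right)\dy.
\]
The inner integral is the whole point: since $\psi_r=\psi+n\log r$, the condition $\psi_r>y$ is equivalent to $r>r_0(y):=e^{(y-\psi)/n}$, and then
\[
\int_{r_0}^{\infty}\frac{n}{r^{n+1}}\,\dr=\frac{1}{r_0^{\,n}}=e^{-(y-\psi)}=\Phi\,e^{-y},
\]
using $\psi=\log\Phi$. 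This is precisely the algebraic miracle that makes the factor $n/r^{n+1}$ conjugate the $r$-average into the Gaussian weight.

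Plugging back in,
\[
\int_0^\infty\frac{n}{r^{n+1}}\zeta_\e(\psi_r)\,\dr=\Phi\int_{-\infty}^\infty e^{-y}\eta_\e(y)\,\dy=\Phi\,e^{\alpha(\eta_\e)},
\]
which gives the asserted identity. Since $\supp\eta_\e\subset[0,\e]$, the integral $\int_{-\infty}^{\infty}e^{-y}\eta_\e(y)\,\dy$ equals $\int_0^{\e}e^{-y}\eta_\e(y)\,\dy$, explaining the second equality in the definition of $e^{\alpha(\eta_\e)}$.

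For the remaining assertions, I would use that $\eta_\e\ge 0$ with $\int\eta_\e=1$. On $[0,\e]$ one has $e^{-y}\le 1$, so $e^{\alpha(\eta_\e)}\le\int_0^\e\eta_\e(y)\,\dy=1$, i.e.\ $\alpha(\eta_\e)\le 0$. For the limit $\e\to 0$, the probability measures $\eta_\e(y)\,\dy$ concentrate at $0$ and $y\mapsto e^{-y}$ is continuous there, so $e^{\alpha(\eta_\e)}\to e^0=1$ and hence $\alpha(\eta_\e)\to 0$. No genuine obstacle is present; the only slightly subtle point is spotting that the specific weight $n/r^{n+1}$ combined with the logarithmic shift in $\psi_r$ produces exactly the Gaussian $\Phi$ after the $r$-integration, which is what allows the localized quantity $\zeta_\e(\psi_r)$ to average out to Huisken's kernel up to the innocuous factor $e^{\alpha(\eta_\e)}$.
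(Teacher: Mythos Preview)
Your argument is correct. The paper proves the identity by integrating by parts in $r$ (using $\ddr\zeta_\e(\psi_r)=\frac{n}{r}\eta_\e(\psi_r)$) and then changing variables $y=\psi_r$ in the resulting $\eta_\e$-integral, with the boundary term $[-r^{-n}\zeta_\e(\psi_r)]_\sigma^\rho$ shown to vanish as $\sigma\to 0,\ \rho\to\infty$. You instead write $\zeta_\e(\psi_r)=\int \mathbf{1}_{\{y<\psi_r\}}\eta_\e(y)\,\dy$ and apply Fubini, which lets you evaluate the $r$-integral $\int_{r_0(y)}^\infty n r^{-n-1}\,\dr=r_0(y)^{-n}=\Phi e^{-y}$ in closed form without any boundary terms to track. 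The two routes are essentially dual (integration by parts versus layer-cake/Fubini), but yours is marginally cleaner here because it bypasses the limiting argument for the boundary contribution. Your treatment of $\alpha(\eta_\e)\le 0$ and $\alpha(\eta_\e)\to 0$ via $e^{-y}\le 1$ on $\supp\eta_\e$ and concentration of the probability measures at $0$ is also correct and, if anything, more careful than the paper's appeal to dominated convergence with $\eta_\e\to\delta$.
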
 
\begin{proof}
	First, note that 
	\[\ddr\zeta_\e(\psi_r)=\frac{n}{r}\eta_\e(\psi_r). \]
	Using this, integration by parts yields 
	\begin{align*}
		\int_{\sigma}^{\rho}\frac{n}{r^{n+1}}\zeta_\e(\psi_r)dr=-\left[\frac{1}{r^n}\zeta_\e(\psi_r)\right]_{\sigma}^{\rho}+\int_{\sigma}^{\rho}\frac{n}{r^{n+1}}\eta_\e(\psi_r)dr. 
	\end{align*} 
	The second term above becomes 
	\[\int_{\sigma}^{\rho}\frac{n}{r^{n+1}}\eta_\e(\psi_r)dr=\Phi\int_{\psi_{\sigma}}^{\psi_{\rho}}e^{-y}\eta_\e(y)dy\] 
	by changing variables, $\psi_r=y$. Since 
	\begin{align*}
		\psi_{\sigma} \to -\infty, \quad  \psi_{\rho} \to \infty, \quad \left[\frac{1}{r^n}\zeta_\e(\psi_r)\right]_{\sigma}^{\rho}\to 0 \quad (\text{as $\sigma\to 0, \rho\to \infty$}), 
	\end{align*} 
	we get  
	\[\int_{0}^{\infty} \frac{n}{r^{n+1}}\zeta_\e(\psi_r)\dr = e^{\a(\eta_\e)}\Phi. \] 
	
	For any $\e>0$, we have 
	\[e^{\a(\eta_\e)}=\int_{-\infty}^\infty e^{-y}\eta_\e(y)\dy=\int_{0}^\infty e^{-y}\eta_\e(y)\dy\leq \int_0^\infty e^{-y}dy=1.\]  
	Hence $\a(\eta_\e)\leq 0$. Furthermore, the dominated convergence theorem implies  
	\[\lim_{\e\to 0}e^{\a(\eta_\e)}=\lim_{\e\to 0}\int_{-\infty}^\infty e^{-y}\eta_\e(y)\dy=\int_{-\infty}^\infty e^{-y}\lim_{\e\to 0}\eta_\e(y)\dy=\int_{-\infty}^\infty e^{-y}\delta(y)dy=e^{0}=1.\] 
	\end{proof} 
	
The following lemma confirms that $A_{\e}(r)$ is exactly a smooth approximation of $\mathcal{A}(\mathcal{M} \cap E_r)$. 	
	\begin{lemma}
	\begin{align}
		\lim_{\e\to 0}A_\e(r)=\mathcal{A}\left(\mathcal{M}\cap E_{r}\right). \label{e0}
	\end{align} 
	Moreover,  
	\begin{align}
	\lim_{\e\to 0} \lim_{r\to 0} \frac{A_\e(r)}{r^n}=\Theta(\mathcal{M}, 0, 0), \quad \lim_{\e\to 0} \lim_{r\to \infty} \frac{A_\e(r)}{r^n}=\lim_{r\to \infty }\frac{\mathcal{A}\left(\mathcal{M}\cap E_{r}\right)}{r^n}. \label{e0r0infty}
	\end{align} 
	\end{lemma}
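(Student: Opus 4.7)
The lemma has three separate claims, which I would tackle in the order \eqref{e0}, then the $r\to\infty$ half of \eqref{e0r0infty}, then the $r\to 0$ half. For \eqref{e0}, the hypothesis \eqref{finite} together with Remark \ref{Ec<Hui} yields $\mathcal{A}(\mathcal{M}\cap E_r)<\infty$, so the dominating functions used below are integrable. The pointwise bounds \eqref{zeta} and \eqref{Z} give $\zeta_\e(\psi_r)\leq \mathbf{1}_{E_r}$ and $Z_\e(\psi_r)\leq [\psi_r]_+$, with pointwise convergence to these upper bounds as $\e\to 0$. First I would push $s\to 0$ in $A_\e(s,r)$ by monotone convergence, writing $A_\e(r)$ as an integral over all of $\mathcal{M}$, and then send $\e\to 0$ by dominated convergence. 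The limit integrand reassembles into that of $\mathcal{A}(\mathcal{M}\cap E_r)$, giving \eqref{e0}.

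For the $r\to\infty$ half of \eqref{e0r0infty}, monotonicity of $r^{-n}A_\e(r)$ from \eqref{mono} and of $r^{-n}\mathcal{A}(\mathcal{M}\cap E_r)$ from \eqref{localmono} makes all relevant $r$-limits exist (possibly infinite). The pointwise inequality $A_\e(r)/r^n\leq \mathcal{A}(\mathcal{M}\cap E_r)/r^n$ yields the $\leq$ direction upon letting $r\to\infty$ and then $\e\to 0$. For the reverse, fix any $R>0$, use $\lim_{r\to\infty}A_\e(r)/r^n\geq A_\e(R)/R^n$, apply \eqref{e0} as $\e\to 0$, and then let $R\to\infty$.

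The $r\to 0$ case is the delicate one and constitutes the main obstacle: the bare inequality $A_\e(r)/r^n\leq \mathcal{A}(\mathcal{M}\cap E_r)/r^n$ only delivers the $\leq$ half at $r\to 0$, and a direct exchange of $\e\to 0$ with $r\to 0$ is not justified. The workaround is to capture $\lim_{\sigma\to 0}A_\e(\sigma)/\sigma^n$ through the monotonicity identity evaluated at an auxiliary radius $\rho$. Concretely, from \eqref{mono}, Fubini and monotone convergence in $\sigma\to 0$ produce
\begin{align*}
\frac{A_\e(\rho)}{\rho^n}-\lim_{\sigma\to 0}\frac{A_\e(\sigma)}{\sigma^n}=\iint_{\mathcal{M}}|H-\na^\perp\psi|^2\left[\int_0^\rho\frac{n}{r^{n+1}}\zeta_\e(\psi_r)\,\dr\right]
\end{align*}
for every fixed $\rho>0$. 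As a function of $(x,t)$, the bracketed $r$-integral is supported in $E_\rho$, is dominated by $\Phi$, and converges pointwise to $(\Phi-\rho^{-n})\mathbf{1}_{E_\rho}$ as $\e\to 0$ by the change of variables of Lemma \ref{pflem} truncated at $\rho$. Huisken's formula on $[-\rho^2/(4\pi),0]$ together with \eqref{finite} makes $|H-\na^\perp\psi|^2\Phi\cdot\mathbf{1}_{E_\rho}$ integrable on $\mathcal{M}$, so dominated convergence applies and the right-hand side tends to $\iint_{\mathcal{M}\cap E_\rho}|H-\na^\perp\psi|^2\Phi-\rho^{-n}\iint_{\mathcal{M}\cap E_\rho}|H-\na^\perp\psi|^2$. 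Combined with \eqref{e0}, this identifies $\lim_{\e\to 0}\lim_{\sigma\to 0}A_\e(\sigma)/\sigma^n$ with the right-hand side of Ecker's identity \eqref{ER35}, which equals $\Theta(\mathcal{M},0,0)$.
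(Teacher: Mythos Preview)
Your argument is correct, but it is considerably more elaborate than the paper's. The paper observes the single identity $E_{e^{-\e/n}r}=\{\psi_r>\e\}$ and combines it with the two-sided bounds \eqref{zeta}, \eqref{Z} to obtain the sandwich
\[
e^{-\e}\,\frac{\mathcal{A}\!\left(\mathcal{M}\cap E_{e^{-\e/n}r}\right)}{r^n}\;\leq\;\frac{A_\e(r)}{r^n}\;\leq\;\frac{\mathcal{A}\!\left(\mathcal{M}\cap E_{r}\right)}{r^n}\qquad(r>0),
\]
which simultaneously yields \eqref{e0} and both limits in \eqref{e0r0infty} (the $r\to 0$ case via \eqref{at0}, the $r\to\infty$ case by monotonicity). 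In particular, the $r\to 0$ claim that you flag as ``the delicate one'' is not delicate at all from this viewpoint: after letting $r\to 0$ the sandwich becomes $e^{-2\e}\,\Theta(\mathcal{M},0,0)\leq \lim_{r\to 0}A_\e(r)/r^n\leq \Theta(\mathcal{M},0,0)$, and sending $\e\to 0$ finishes it. Your route---dominated convergence for \eqref{e0}, a monotonicity squeeze for $r\to\infty$, and for $r\to 0$ passing to the limit in \eqref{mono}, computing the truncated version of the integral in Lemma~\ref{pflem}, and matching the outcome against Ecker's identity \eqref{ER35}---is logically sound and avoids needing to spot the heat-ball rescaling $E_{e^{-\e/n}r}=\{\psi_r>\e\}$, but it re-derives machinery (essentially \eqref{ER35}) that the sandwich bypasses entirely. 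The paper's approach is the one to prefer here: one inequality, three consequences.
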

	\begin{proof}
		Note that 
		\[E_{e^{-\frac{\e}{n}}r}=\{\psi_r-\e>0\}. \] 
		Using this with \eqref{zeta} and \eqref{Z}, it is not difficult to check that 
		\begin{align*}
		e^{-\e}\frac{\mathcal{A}\left(\mathcal{M}\cap E_{e^{-\frac{\e}{n}}r}\right)}{r^n}\leq \frac{A_\e(r)}{r^n}\leq  \frac{\mathcal{A}\left(\mathcal{M}\cap E_{r}\right)}{r^n} \quad \text{for all $r>0$. }
	\end{align*}
		This immediately implies the lemma. Here we use \eqref{at0} for the first equality in \eqref{e0r0infty}. 
	\end{proof}

Now we are ready to prove Theorem \ref{thm1}. By the monotonicity, $\lim_{r\to \infty} r^{-n} \mathcal{A}(\mathcal{M}\cap E_r)$ exists but may be infinity. If $\lim_{r\to \infty} r^{-n} \mathcal{A}(\mathcal{M}\cap E_r)=\infty$, then it follows from \eqref{Ec-vs-Hui} or Remark \ref{Ec<Hui} that $\Theta(\mathcal{M},\infty)=\lim_{t\to-\infty}\int_{M_t}\Phi d\mu_t=\infty$, and the claim of Theorem \ref{thm1} trivially holds. Therefore, we may only consider the case that $\lim_{r\to \infty} r^{-n} \mathcal{A}(\mathcal{M}\cap E_r)<\infty$. 
\begin{proof}[Proof of the Main Theorem]
	Letting $\sigma\to 0$, $\rho\to \infty$ in \eqref{mono} with Lemma \ref{pflem} and Fubini's theorem, we have 
	\begin{align*}
		\lim_{\rho \to \infty} \frac{A_\e(\rho)}{\rho^n}- \lim_{\sigma\to 0} \frac{A_\e(\sigma)}{\sigma^n}&=\int_{-\infty}^{0}\dt \int_{0}^{\infty}\dr \int_{M_t}\frac{n}{r^{n+1}}|H-\na^\perp\psi|^2\zeta_\e(\psi_r)\dm_t \\
		&= e^{\a(\eta_\e)}\int_{-\infty}^0\dt \int_{M_t}|H-\na^\perp \psi|^2\Phi \dm_t \\
		&=e^{\a(\eta_\e)} \iint_{\mathcal{M}}|H-\na^\perp \psi|^2\Phi.
	\end{align*}
	Note that the both sides are finite since we assume $\lim_{r\to \infty} r^{-n} \mathcal{A}(\mathcal{M}\cap E_r)<\infty$. Taking $\e\to 0$ in this equality, \eqref{e0r0infty} leads to 
	\begin{align}
		\lim_{r\to \infty}\frac{\mathcal{A}(\mathcal{M}\cap E_r)}{r^n}-\Theta(\mathcal{M}, 0, 0)=\iint_{\mathcal{M}} |H-\na^\perp\psi|^2 \Phi. \label{local}
	\end{align}

	On the other hand, integrating Huisken's monotonicity formula \eqref{Huisken} with respect to $t$ in $-\infty<\tau\leq t\leq \theta<0$, we have 
	\[\int_{M_{\theta}}\Phi\dm_{\theta} -\int_{M_{\tau}}\Phi\dm_{\tau}=-\int_{\tau}^{\theta}\dt \int_{M_t}|H-\na^\perp\psi|^2\Phi \dm_t. \]
	Then we take $\tau \to -\infty $ and $\theta \to 0$ to obtain 
	\begin{align}
		\lim_{t\to -\infty}\int_{M_t}\Phi\dm_t-\Theta(\mathcal{M}, 0, 0)=\iint_{\mathcal{M}} |H-\na^\perp\psi|^2\Phi. \label{global}
	\end{align}
	Comparing \eqref{local} with \eqref{global}, we have the desired relation 
	\[\lim_{r\to\infty}\frac{\mathcal{A}(\mathcal{M}\cap E_r)}{r^n}=\lim_{t\to -\infty}\int_{M_t}\Phi\dm_t=\Theta(\mathcal{M}, \infty). \]
\end{proof}

%%%%%%%%%%%%%%%%%%%%%%%%%%%%
%%%%%%%%%%%%%%%%%%%%%%%%%%%%
%%%%%%%%%%%%%%%%%%%%%%%%%%%%
\section{Applications} 

In this section we will see that Ecker's integral coincides with the entropy at infinity for ancient mean curvature flows. We always assume that $\mathcal{M}=(M_t)_{t<0}$ is an ancient mean curvature flow with \eqref{finite}, that is, Huisken's integral on each time-slice is finite.   
 \subsection{Entropy} 
Choose any point $(x_0, t_0)\in \R^N\times (0, \infty)$ and fix it. The \textit{$F$-functional} for $M_t\subset \R^N$ is defined by 
\[F_{x_0, t_0}(M_t):=\frac{1}{(4\pi t_0)^{\frac{n}{2}}}\int_{M_t}e^{-\frac{|x-x_0|^2}{4t_0}}\dm_t=\int_{M_t}\Phi_{x_0, t_0}(x, 0)\dm_t, \]
where $\Phi_{x_0, t_0}(x, t)=\Phi(x-x_0, t-t_0)$ (see Remark \ref{rmk_mono}). Then the \textit{entropy} of $M_t$ is defined by the supremum of the $F$-functional: 
 \[\lambda(M_t):=\sup_{x_0, t_0}F_{x_0, t_0}(M_t). \] 
In the following argument, we use some simple properties of the entropy: 
\begin{enumerate}
	\item $\lambda({M_t})$ is invariant under translations and scalings in $\R^N$, i.e.,   
	\[\lambda(cM_t+v)=\lambda(cM_t)=\lambda(M_t), \quad v\in \R^N, c>0.\] 
		\item $\lambda(M_t)$ is nonincreasing along mean curvature flow, i.e.,   
	\[\lambda(M_s)\geq \lambda(M_t), \quad s<t. \]
\end{enumerate}
Note that the property (2) follows from Huisken's monotonicity formula (see Remark \ref{rmk_mono}). 
 
\begin{lemma} \label{ent}
	For an ancient mean curvature flow $\mathcal{M}=(M_t)_{t<0}$ with \eqref{finite}, we have
	\[\lim_{t\to -\infty}\lambda(M_t)=\sup_{t<0}\lambda(M_t)=\Theta(\mathcal{M}, \infty). \]
\end{lemma}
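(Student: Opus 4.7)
The plan is to prove the two equalities separately. The first, $\lim_{t\to-\infty}\lambda(M_t) = \sup_{t<0}\lambda(M_t)$, is immediate from property (2) above: the entropy is monotone nonincreasing along the flow, so the supremum over $(-\infty, 0)$ is attained as the limit.

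For the second equality $\sup_{t<0}\lambda(M_t) = \Theta(\mathcal{M}, \infty)$, I would prove two inequalities. The easy direction $\Theta(\mathcal{M}, \infty) \leq \sup_{t<0}\lambda(M_t)$ follows by unwinding the definitions: for each $t<0$,
\[\int_{M_t}\Phi(x, t)\dm_t = F_{0, -t}(M_t) \leq \lambda(M_t),\]
and letting $t\to -\infty$, combined with the first equality, yields the bound.

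The substantive direction is $\sup_{t<0}\lambda(M_t) \leq \Theta(\mathcal{M}, \infty)$. The key observation is the identity
\[F_{x_0, t_0}(M_t) = \int_{M_t}\Phi_{x_0, t+t_0}(\cdot, t)\dm_t,\]
which is a direct computation from the definitions (indeed, $\Phi_{x_0, t+t_0}(x, t) = \Phi(x-x_0, -t_0) = (4\pi t_0)^{-n/2}\exp(-|x-x_0|^2/(4t_0))$). For fixed $t < 0$ and any $(x_0, t_0) \in \R^N \times (0, \infty)$, I would apply Huisken's monotonicity formula (Remark \ref{rmk_mono}) with respect to the space-time base point $(x_0, t+t_0)$, admissible since $t < t+t_0$. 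This gives that $s \mapsto \int_{M_s}\Phi_{x_0, t+t_0}(\cdot, s)\dm_s$ is nonincreasing in $s$ on $(-\infty, t+t_0)$, so that
\[F_{x_0, t_0}(M_t) \leq \lim_{s\to -\infty}\int_{M_s}\Phi_{x_0, t+t_0}(\cdot, s)\dm_s = \Theta(\mathcal{M}, \infty),\]
where the last equality uses the base-point independence of $\Theta(\mathcal{M}, \infty)$ recorded in the excerpt. Taking the supremum first over $(x_0, t_0)$ and then over $t < 0$ completes the proof.

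The main subtlety, rather than a true obstacle, is that Huisken's monotonicity formula \eqref{Huisken} is stated under a finiteness assumption for the integrand on each time-slice, whereas \eqref{finite} only guarantees this for the base point $(0, 0)$. This is easily dispatched: the monotonicity is valid in the extended-real sense. Indeed, either $\int_{M_s}\Phi_{x_0, t+t_0}\dm_s$ is finite for some $s < t+t_0$, in which case the classical monotonicity gives the bound as above; or it is $+\infty$ for all sufficiently negative $s$, in which case $\Theta(\mathcal{M}, \infty) = +\infty$ by base-point independence and the desired inequality holds trivially.
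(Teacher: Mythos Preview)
Your proof is correct and follows essentially the same approach as the paper: both directions of the inequality rest on the identity $F_{x_0,t_0}(M_t)=\int_{M_t}\Phi_{x_0,t_0+t}(\cdot,t)\,d\mu_t$ together with Huisken's monotonicity and the base-point independence of $\Theta(\mathcal{M},\infty)$. Your explicit isolation of the equality $\lim_{t\to-\infty}\lambda(M_t)=\sup_{t<0}\lambda(M_t)$ via entropy monotonicity, and your remark on the finiteness hypothesis for Huisken's formula at shifted base points, are minor expository additions rather than a different strategy.
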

\begin{proof}
	From the definition of the $F$-functional and the entropy, we compute 
	\begin{align*}
		\int_{M_t}\Phi_{x_0, t_0}(x, t)\dm_t =\int_{M_t}\Phi_{x_0, t_0-t}(x, 0)\dm_t=F_{x_0, t_0-t}(M_t) \leq \lambda(M_t).  
	\end{align*}
	Letting $t\to -\infty$ in this inequality, we have 
	\[\Theta(\mathcal{M}, \infty)=\lim_{t\to -\infty}\int_{M_t}\Phi_{x_0, t_0}(x, t)\dm_t\leq \lim_{t\to -\infty} \lambda(M_t).\] 
	Here, we used the fact that the quantity 
	\[\lim_{t\to -\infty}\int_{M_t}\Phi_{x_0, t_0}(x, t)\dm_t\] 
	does not depend on the choice of the point $(x_0, t_0)\in \R^N\times (0, \infty)$ (see Remark \ref{rmk_mono}). 
	
	On the other hand, it holds from Huisken's monotonicity formula that 
	\begin{align*}
		F_{x_0, t_0}(M_t)&=\int_{M_t}\Phi_{x_0, t_0}(x, 0) \dm_t\\
		&=\int_{M_t}\Phi_{x_0, t_0+t}(x, t) \dm_t \leq \int_{M_s}\Phi_{x_0, t_0+t}(x, s) \dm_s\leq \Theta(\mathcal{M}, \infty)
	\end{align*}
	for $s<t$. Taking supremum with respect to $(x_0, t_0)$ in this inequality, we have $\lambda(M_t)\leq \Theta(\mathcal{M}, \infty)$, and hence 
	\[\lim_{t\to -\infty}\lambda(M_t)\leq \Theta(\mathcal{M}, \infty).\] 
	This proves the lemma. 
\end{proof}
As a direct consequence of Theorem \ref{thm1} and Lemma \ref{ent}, we have a relation between Ecker's integral and the entropy. 
\begin{corollary} \label{cor3.2} 
For an ancient mean curvature flow $\mathcal{M}=(M_t)_{t<0}$ with \eqref{finite}, we have 
	\[\lim_{r\to\infty}\frac{\mathcal{A}(\mathcal{M}\cap E_r)}{r^n}=\Theta(\mathcal{M},\infty)=\sup_{t<0}\lambda(M_t). \] 
\end{corollary}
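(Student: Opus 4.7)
The plan is essentially to concatenate the two equalities from Theorem \ref{thm1} and Lemma \ref{ent}, since the assumption \eqref{finite} is exactly what both results require. In more detail, Theorem \ref{thm1} applied to $\mathcal{M}$ under \eqref{finite} yields
\[
\lim_{r\to\infty}\frac{\mathcal{A}(\mathcal{M}\cap E_r)}{r^n}=\Theta(\mathcal{M},\infty),
\]
with both sides allowed to be $\infty$. Separately, Lemma \ref{ent} gives $\Theta(\mathcal{M},\infty)=\sup_{t<0}\lambda(M_t)$. Chaining these produces the chain of equalities in the corollary.

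There is no real obstacle here; the corollary is a bookkeeping consequence. The only thing worth double-checking is that the two statements are compatible in the infinite case: if $\Theta(\mathcal{M},\infty)=\infty$, then by Remark \ref{Ec<Hui} (or by \eqref{Ec-vs-Hui}) the limit of $r^{-n}\mathcal{A}(\mathcal{M}\cap E_r)$ is also $\infty$, and on the entropy side, since $\lambda(M_t)\geq \int_{M_t}\Phi_{x_0,t_0}(x,t)\dm_t$ for any auxiliary $(x_0,t_0)$, sending $t\to-\infty$ also forces $\sup_{t<0}\lambda(M_t)=\infty$. Thus the identity holds even when all three quantities are infinite, so no separate argument is needed.

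Therefore the proof I would write is just two sentences: invoke Theorem \ref{thm1} to identify $\lim_{r\to\infty}r^{-n}\mathcal{A}(\mathcal{M}\cap E_r)$ with $\Theta(\mathcal{M},\infty)$, then invoke Lemma \ref{ent} to identify $\Theta(\mathcal{M},\infty)$ with $\sup_{t<0}\lambda(M_t)$, noting that the hypothesis \eqref{finite} is assumed throughout Section 3 and is precisely what allows both results to be applied.
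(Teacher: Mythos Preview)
Your proposal is correct and matches the paper's approach exactly: the paper simply states the corollary as ``a direct consequence of Theorem \ref{thm1} and Lemma \ref{ent}'' without further argument. Your additional remark about compatibility in the infinite case is a helpful sanity check but not needed, since both Theorem \ref{thm1} and Lemma \ref{ent} already allow the relevant quantities to be $+\infty$.
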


\subsection{Example: entropy and Ecker's integral of translating solitons}
Now we consider \textit{translating solitons} of mean curvature flow. A translating soliton is a solution of mean curvature flow which moves only by a translation in some fixed direction $\mathrm{v} \in \R^N, |\mathrm{v}|=1$. So, a translating soliton is defined for all the time $t\in (-\infty, \infty)$ and can be written as 
\[M_t=M_0+t\mathrm{v}.\]
By the property (1) of the entropy, we have 
\[\lambda(M_t)=\lambda(M_0)\]
for a translating soliton. Therefore we have the following consequence. 
\begin{corollary}
	Let $\mathcal{M}=(M_t)_{t\in \R}=(M_0+t\mathrm{v})_{t\in \R}$ be a translating soliton which satisfies \eqref{finite}. Then we have 
	\[\lim_{r\to\infty}\frac{\mathcal{A}(\mathcal{M}\cap E_r)}{r^n}=\Theta(\mathcal{M},\infty)=\lambda(M_0). \]
\end{corollary}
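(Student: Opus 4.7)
The plan is to invoke Corollary~\ref{cor3.2} directly and then reduce the remaining equality to the translation invariance of the entropy. Corollary~\ref{cor3.2} applies verbatim to the ancient portion $(M_t)_{t<0}$ of the translating soliton, which satisfies \eqref{finite} by hypothesis and for which both $\mathcal{A}(\mathcal{M}\cap E_r)$ and $\Theta(\mathcal{M},\infty)$ are defined through integrals over $t<0$ anyway. This already furnishes
\[
\lim_{r\to\infty}\frac{\mathcal{A}(\mathcal{M}\cap E_r)}{r^n}=\Theta(\mathcal{M},\infty)=\sup_{t<0}\lambda(M_t).
\]
So the only task left is to show $\sup_{t<0}\lambda(M_t)=\lambda(M_0)$.

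For this step I would appeal to property~(1) of the entropy recalled just before Lemma~\ref{ent}, namely invariance of $\lambda$ under translations in $\R^N$. Since $M_t=M_0+t\mathrm{v}$ is by definition a pure translation of $M_0$, this invariance immediately yields $\lambda(M_t)=\lambda(M_0+t\mathrm{v})=\lambda(M_0)$ for every $t\in\R$. Hence $\sup_{t<0}\lambda(M_t)=\lambda(M_0)$, and substituting this into the chain of equalities above concludes the proof.

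There is essentially no substantive obstacle: once one notices that applying Corollary~\ref{cor3.2} to a soliton defined on all of $\R$ is unproblematic, the proof collapses to combining two standard facts. The only minor points worth spelling out are that \eqref{finite}, being a condition on each individual time-slice, is automatically preserved under restriction from $t\in\R$ to $t<0$, and that the limit defining $\Theta(\mathcal{M},\infty)$ does not depend on the choice of basepoint (see Remark~\ref{rmk_mono}), so the value read off from the ancient portion truly equals the quantity that appears in the statement.
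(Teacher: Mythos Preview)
Your proof is correct and matches the paper's approach exactly: the paper derives this corollary by first observing $\lambda(M_t)=\lambda(M_0)$ from translation invariance (property~(1)) and then invoking Corollary~\ref{cor3.2}. Your additional remarks about restricting to $t<0$ and basepoint independence are harmless clarifications of points the paper leaves implicit.
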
 

Let $\mathcal{G}=(\Gamma_t)_{t\in \R}=(\Gamma_0+t\mathrm{v})_{t\in \R}$ be the \textit{grim reaper} and $\mathcal{B}=(B_t)_{t\in \R}=(B_0+t\mathrm{v})_{t\in \R}$ be the \textit{bowl soliton}. The grim reaper is a plane curve translating soliton, and the bowl soliton is an $n$-dimensional hypersurface which is rotationally symmetric, strictly convex, and entire graphic. In \cite{Gu19}, we can find explicit computations of the entropy for $\mathcal{G}$ and $\mathcal{B}$ by Guang: 
\[\lambda(\Gamma_0)=2, \quad \lambda(B_0)=\lambda(S^{n-1})=n\omega_n \left(\frac{n-1}{2\pi e}\right)^{\frac{n-1}{2}}, \] 
where $S^{n-1}\subset \R^n$ is the standard $(n-1)$-dimensional sphere and $\omega_n$ is the volume of unit ball in $\R^n$.

%%%%%%%%%%%%%%%%%%%%%%%%%%%%

\end{document}